\newtheorem{theorem}{Theorem}[section]
\newtheorem{lemma}[theorem]{Lemma}
\newtheorem{e-proposition}[theorem]{Proposition}
\newtheorem{e-definition}[theorem]{Definition\rm}
\newtheorem{remark}{\bf Remark\/}
\theoremstyle{plain}
\newcommand{\nc}{\newcommand}
\newcommand{\cal}{\mathcal}
\newcommand{\on}{\operatorname}
\nc{\Spec}{\on{Spec}}
\nc \Zb{{\ensuremath{\bold Z}}} 
\nc \Oc{\ensuremath{\cal O}}
\nc \Ic{{\ensuremath{\cal I}}} 
\nc \Pc{{\ensuremath{\cal P}}}
\nc \Dc{\ensuremath{\mathcal D}}
\nc \Cb{{\ensuremath{\mathbf C}}} 
\nc \Ab{{\ensuremath{\mathbf A}}}
\nc \supp{\on {supp}}
\nc \Char{\on {Char}}
\nc \Ch{\on {Ch}}
\nc \Ann {\on {Ann }}
\nc \Mod{\on {Mod}}
\nc \End{\on {End}}
\nc \hto {\on {ht}}
\nc \Imo {\on {Im}}
\nc \mf{\mathfrak m} 
\nc \mh{\hat{\mf}} 
\nc \nf{\mathfrak n}
\nc \Of{\mathfrak O}
\nc \of{\mathfrak o}
\nc \rf{\mathfrak r}
\nc \tf{\mathfrak t}
\nc \mufr{{\mathbf \mu}}
\nc \hf{\mathfrak h} 
\nc \qf{\mathfrak q} 
\nc \bfr{\mathfrak b} 
\nc \kfr{\mathfrak k} 
\nc \pfr{\mathfrak p} 
\nc \af{\mathfrak a }
\nc \cf{\mathfrak c }
\nc \sfr{\mathfrak s} 
\nc \ufr{\mathfrak u} 
\nc \g{\mathfrak g} 
\nc \gA{\g_{\Ao}} 
\nc \lfr{\mathfrak l}
\nc \afr{\mathfrak a}
\nc \gfh{\hat {\mathfrak g}}
\nc \gl{\mathfrak { gl }}
\nc \Sl{\mathfrak {sl}}
\nc \SU{\mathfrak {SU}}
\nc{\Homf}{\mathfrak{Hom}}
\nc{\Theorem}[1]{Theorem~{#1}}
\nc{\Lemma}[1]{Lemma~{#1}} 
\nc{\Lem}[1]{({\sl Lem.} ~{#1})} 
\begin{document}

\selectlanguage{english}
                 \title{Two extensions of Hilbert's finiteness theorem}\label{invariantrings}

               \author{Rolf K{{\"a}l}lstr{\"o}m} \address{Department of Mathematics,University of
               G\"avle, 100 78 G\"avle, Sweden} \email{rkm@hig.se} \subjclass{Primary 14A10, 32C38;
               Secondary 17B99} \date{\today}
\begin{abstract}

Let $S^{\cdot }$ be a noetherian graded algebra over a commutative $k$-algebra $A$, where $k$ is a
commutative ring, and assume it is a module over a Lie algebroid $\g_{A/k}$. If $S^\cdot$ is semi-simple over
$\g_{{A/k}}$ we prove that its  ring of invariants  $\bar S^{\cdot}$ is notherian. When $\g_{A/k}$ is a solvable Lie algebra
over $A$ we construct noetherian subalgebras of $S^\cdot$ from subsets of characters of
$\g_{A/k}$. We give similar results for noetherian modules over the pair $(S^{\cdot}, \g_{{A/k}})$.
\end{abstract}

\maketitle

               \section{}
               Let $A/k$ be a commutative $k$-algebra over the ring $k$, and $T_{A/k}$ its
               $A$-module of $k$-linear derivations, which is also a $k$-Lie algebra. Let $\g_{A/k}$
               be an $A$-module of finite type provided with a $k$-linear Lie bracket $[\cdot ,
               \cdot ]: \g_{{A/k}}\otimes_{k}\g_{{A/k}} \to \g_{{A/k}}$ and a homomorphism $\alpha :
               \g_{{A/k}} \to T_{{A/k}}$ of Lie algebras and $A$-modules, where we require
               $[\delta,a\eta ]= \alpha(\delta)(a)\eta + a[\delta, \eta]$, $a\in A$, $\delta,
               \eta \in \g_{A/k}$. We call the object $(A, \g_{{A/k}}, \alpha)$ a Lie algebroid,
               thinking of it as an infinitesimal Lie groupoid; the names Lie-Rinehart algebra and
               Atiyah algebra also circulate in the literature. Here are two basic families of
               examples of Lie algebroids: (i) $\g_{A/k} = T_{{A/k}}(I)\subset T_{{A/k}}$, the Lie
               subalgebroid of derivations $\partial$ which preserve an ideal $I\subset A$,
               $\partial (I)\subset I$; (ii) $\g_{A}= A\otimes_{k }\g_{k}$, where $\g_{k}$ is a
               $k$-Lie algebra provided with a homomorphism of $k$-Lie algebras $\phi: \g_{k}\to
               T_{{A/k}}$, where $[a_1\otimes \delta_{1}, a_{2}\otimes \delta_{2}] =
               a_1\phi(\delta_1)(a_{2})\otimes \delta_{2} -a_2\phi(\delta_2)(a_1)\otimes \delta_1 +
               a_1a_2\otimes [\delta_1, \delta_2] $, $a_1,a_2\in A, \delta_1, \delta_2\in \g_{k}$,
               and $\alpha : A\otimes_k\g_{{k}}\to T_{{A/k}}$, $\alpha (a\otimes \delta) = a
               \phi(\delta)$.
               
  A $\g_{A/k}$-module is an $A$-module $M$ and a homomorphism of $k$-Lie algebras $f: \g_{{A/k}}\to
  \End_k(M)$ such that $f(a\delta)(m)= af(\delta)(m)$ and $f(\delta)(am)= \alpha(a)m+ a
  f(\delta)(m)$, $\delta \in \g_{{A/k}}$, $a\in A$, $m\in M$. By a  graded $\g_{A/k}$-algebra we mean a graded commutative $A$-algebra
  $S^\cdot = \oplus_{i\geq 0} S^i$, which at the same time is a $\g_{A/k}$-module by a homomorphism of
  $A$-modules and Lie algebras $\phi : \g_{A/k} \to T_{S^\cdot/k}$, such that $\phi(\delta) (
  S^i)\subset S^i$, $\delta \in \g_{A/k}$.

  A graded $(S^\cdot , \g_{A/k})$-module is a graded $S^\cdot $-module
  and $\g_{A/k}$-module $M^\cdot= \oplus_{i\in \Zb} M^i$ such that
  $\delta \cdot M^i \subset M^i$ and $\delta (s m)= \delta (s) m + s
  \delta m$, $s\in S^\cdot, m\in M^\cdot $, $\delta \in
  \g_{A}$. We let $\Mod(S^\cdot, \g_{A/k})$ be the the category of
  graded $(S^\cdot , \g_{A/k})$- modules $M^\cdot$ which are of finite
  type over $S^\cdot$.

  When $M^\cdot \in \Mod(S^\cdot,\g_{A/k})$ we denote its invariant
  space $\bar M^\cdot = (M^\cdot)^{\g_{A}}= \{m\in M^\cdot \
  \vert \ \delta \cdot m = 0, \delta \in \g_{A}\}$. Clearly, $\bar
  S^\cdot= \oplus \bar S^i $ is a graded subring of $S^\cdot$, and
  $\bar M^\cdot$ is an $\bar S^\cdot$-module.

\section{}
The following result generalizes Hilbert's theorem about the finite
generation of invariant rings with respect to semi-simple Lie algebras
\cite{hilbert-invarianttheory}.
 
\begin{theorem} \label{thm001} Assume that $M^\cdot \in
  \Mod(S^\cdot, \g_{A}) $ where $S^\cdot$ is a graded noetherian
  $\g_{A/k}$-algebra.
  \begin{enumerate}
  \item Assume that $S^\cdot$ is semi-simple over $\g_{A}$ and $\bar
    S^0$ is noetherian.  Then $\bar S^\cdot$ is a graded and
    noetherian subring of $S^\cdot$.
  \item Assume (1) and also that $M^\cdot$ is semi-simple over
    $\g_{A}$. Then $\bar M^\cdot $ is of finite type over $\bar
    S^\cdot$.
  \end{enumerate}
\end{theorem}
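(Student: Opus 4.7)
The plan is to mimic the classical Reynolds-operator proof of Hilbert's theorem, with the reductive group replaced by the Lie algebroid $\g_{A/k}$.

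First I would construct a graded, $\bar S^\cdot$-linear projection $R : S^\cdot \to \bar S^\cdot$. Since $\g_{A/k}$ preserves the grading, each $S^i$ is a semisimple $\g_{A/k}$-module; let $W^i \subset S^i$ be the sum of all non-trivial simple $\g_{A/k}$-submodules, so that $S^i = \bar S^i \oplus W^i$, and set $W = \bigoplus_i W^i$. Define $R$ to be the projection with kernel $W$. The essential point is that $\bar S^\cdot \cdot W \subset W$: for $f \in \bar S^\cdot$ and a simple non-trivial submodule $V \subset W$, the map $V \to S^\cdot$, $v \mapsto fv$, is $\g_{A/k}$-equivariant by the Leibniz rule (since $\delta(f)=0$ forces $\delta(fv) = f\delta(v)$); its image is therefore semisimple with no trivial component and lies in $W$.

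For (1) I would consider the ideal $J = \bar S^\cdot_+ \cdot S^\cdot$ in the noetherian algebra $S^\cdot$. Since $J$ is generated, as an ideal, by $\bar S^\cdot_+$, one may pick homogeneous generators $h_1, \dots, h_n \in \bar S^\cdot_+$. I would then show by induction on degree that $\bar S^\cdot = \bar S^0[h_1, \dots, h_n]$: given homogeneous $f \in \bar S^d$ with $d > 0$, write $f = \sum s_i h_i$ in $S^\cdot$ with $\deg s_i < d$, and apply $R$ to obtain $f = \sum R(s_i)\, h_i$ with each $R(s_i) \in \bar S^{<d}$; conclude by the inductive hypothesis. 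As $\bar S^0$ is noetherian, Hilbert's basis theorem then gives noetherianity of $\bar S^\cdot$.

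For (2) I would apply the same philosophy to the graded $S^\cdot$-submodule $N = S^\cdot \cdot \bar M^\cdot$ of $M^\cdot$. By noetherianity of $M^\cdot$ over $S^\cdot$, $N$ admits a finite generating set, and those generators may be chosen inside $\bar M^\cdot$, say $m_1, \dots, m_r$. Semisimplicity of $M^\cdot$ yields a $\bar S^\cdot$-linear Reynolds projection $R_M : M^\cdot \to \bar M^\cdot$ by the identical construction, satisfying $R_M(sm) = R(s)\, m$ for $s \in S^\cdot$, $m \in \bar M^\cdot$, again by the Leibniz argument. Hence every $m \in \bar M^\cdot$ expands as $m = R_M(\sum s_i m_i) = \sum R(s_i)\, m_i \in \bar S^\cdot m_1 + \cdots + \bar S^\cdot m_r$.

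The main technical hurdle is the very first step: verifying $\bar S^\cdot$-linearity of $R$ (and $R_M$) in the Lie algebroid setting. It is conceptually clean but must be handled carefully, since semisimplicity over a Lie algebroid is subtler than over a group and relies crucially on the interaction between the grading, the isotypic decomposition, and the Leibniz identity. Once this is in place the rest is a direct translation of the classical ideal-truncation and Hilbert-basis argument.
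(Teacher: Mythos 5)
Your overall strategy coincides with the paper's: both proofs rest on decomposing $S^\cdot$ (and $M^\cdot$) into the part generated by the invariants plus a complementary sum $W$ of simple submodules having no invariants, on the multiplicativity statement $\bar S^\cdot \cdot W \subset W$ (this is exactly \Lemma{\ref{hilblemma}}), and on the same induction on degree. Your Schur-type proof of that multiplicativity --- multiplication by an invariant element is a morphism of $\g_{A/k}$-modules, hence carries a simple module with zero invariants onto a module with zero invariants --- is a genuinely cleaner route to the key lemma than the paper's argument via annihilators in the enveloping algebra $\Dc(\g_{A/k})$, and your choice of finitely many homogeneous generators $h_1,\dots,h_n\in\bar S^\cdot_+$ of the ideal $S^\cdot\bar S^\cdot_+$ lets you bypass \Lemma{\ref{extra-lemma}} in part (1).

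There is, however, a gap in the stated generality, located precisely at the step you flag as the technical hurdle. The decomposition $S^i=\bar S^i\oplus W^i$ is false for a general Lie algebroid: a $\g_{A/k}$-submodule is in particular an $A$-submodule, whereas $\bar S^i$ need not be one, since $\delta(a\bar s)=\alpha(\delta)(a)\,\bar s$ for invariant $\bar s$. A simple submodule can therefore carry a nonzero action and still have nonzero invariants (it then equals $A$ times its invariants), so ``non-trivial'' is not the right dichotomy; the correct decomposition, and the one the paper uses, is $S^i=A\bar S^i\oplus W^i$ with $W^i$ the sum of the simples with \emph{zero invariants}. Your Reynolds operator consequently projects onto $A\bar S^\cdot$ rather than onto $\bar S^\cdot$, the coefficients $R(s_i)$ in your induction lie only in $A\bar S^{<d}$, and the conclusion $f\in\bar S^0[h_1,\dots,h_n]$ does not follow without a further argument stripping the $A$-coefficients from an invariant combination. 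When $A=k$, or more generally when the anchor $\alpha$ vanishes, the issue disappears and your proof is complete; note that the paper's own passage from $\sum A\bar S^i\bar S^{d-i}$ to $\sum\bar S^i\bar S^{d-i}$ is exactly the point where this same care is needed.
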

\vskip 0.5\baselineskip
\begin{remark}\rm
  It is likely that the assumption in \Theorem{\ref{thm001}}, (2), can
 be relaxed so that (1) need not be assumed, at least when $A$ is a
  field.  The problem is that we need to know that the $\g_{A/k}$-module
  $Hom_A(M^\cdot, M^\cdot)$ is semi-simple, since then $S^\cdot$
  can replaced by its image in $Hom_A(M^\cdot, M^\cdot)$. However,
  it seems that in general it is a non-trivial problem to see when the
  $\g_{A/k}$-module of $A$-linear homomorphisms of semi-simple modules is
  semi-simple.
\end{remark}
\vskip 0.5\baselineskip

\begin{lemma}\label{extra-lemma}
If $N$ is an $(S^0, \g_{A/k})$-module of finite type over $S^0$
  and semi-simple over $\g_{A/k}$, then $\bar N$ is of finite type over
  $\bar S^0$.
\end{lemma}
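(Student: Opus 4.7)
The proof is a Reynolds-operator argument, resting on the noetherianness of $S^0$.

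Within the ambient setting of \Theorem{\ref{thm001}}, semi-simplicity of $S^\cdot$ over $\g_{A/k}$ passes to the direct summand $S^0$, so both $S^0$ and $N$ are semi-simple $\g_{A/k}$-modules. This gives canonical $A$-linear, $\g_{A/k}$-equivariant decompositions $S^0 = \bar S^0 \oplus (S^0)'$ and $N = \bar N \oplus N'$, where $(S^0)'$ and $N'$ are the sums of all simple $\g_{A/k}$-submodules not annihilated by $\g_{A/k}$. Let $R_{S^0}\colon S^0\to\bar S^0$ and $R_N\colon N\to\bar N$ be the resulting projections.

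First I would observe that $S^0\cdot\bar N$ is a $\g_{A/k}$-stable $S^0$-submodule of $N$: for $s\in S^0$, $n\in\bar N$, and $\delta\in\g_{A/k}$ one has $\delta(sn)=\delta(s)n\in S^0\bar N$. Since $S^0$ is noetherian (being the degree-$0$ part of the graded noetherian ring $S^\cdot$) and $N$ is of finite type over $S^0$, the submodule $S^0\bar N$ is also of finite type over $S^0$, and one can choose generators $n_1,\dots,n_r$ lying in $\bar N$.

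Next I would upgrade this to $\bar S^0$-generation. For each $i$ the map $\phi_i\colon S^0\to N$, $s\mapsto sn_i$, is $A$-linear and, because $\delta(n_i)=0$, is also $\g_{A/k}$-equivariant. A $\g_{A/k}$-equivariant homomorphism between semi-simple modules sends trivial simples into trivial simples (or to zero) and non-trivial simples into non-trivial simples (or to zero); therefore $\phi_i(\bar S^0)\subset\bar N$ and $\phi_i((S^0)')\subset N'$, which yields the intertwining $R_N\circ\phi_i=\phi_i\circ R_{S^0}$. For $n\in\bar N\subset S^0\bar N$, writing $n=\sum_i s_i n_i$ with $s_i\in S^0$ and applying $R_N$ then gives
\[
n = R_N(n) = \sum_i R_N(s_i n_i) = \sum_i R_{S^0}(s_i)\,n_i \in \bar S^0 n_1 + \cdots + \bar S^0 n_r,
\]
so $n_1,\dots,n_r$ generate $\bar N$ over $\bar S^0$.

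The main technical point is the intertwining $R_N\circ\phi_i=\phi_i\circ R_{S^0}$, which rests on the semi-simplicity of both $S^0$ and $N$ together with the $\g_{A/k}$-equivariance of multiplication by an invariant element. Once that compatibility is in hand, the noetherianness of $S^0$ immediately converts the existence of $S^0$-generators of $S^0\bar N$ chosen in $\bar N$ into $\bar S^0$-generation of $\bar N$.
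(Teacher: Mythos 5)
Your first half --- choosing $S^0$-generators $n_1,\dots,n_r$ of $S^0\bar N$ inside $\bar N$ and then descending to $\bar S^0$-generation of $\bar N$ --- is exactly the skeleton of the paper's proof. The gap is in the descent step. You assert canonical $A$-linear, $\g_{A/k}$-equivariant decompositions $S^0=\bar S^0\oplus (S^0)'$ and $N=\bar N\oplus N'$. In the Lie algebroid setting of the paper this is false: when the anchor $\alpha$ is nonzero, $\bar N$ is not an $A$-submodule of $N$ (for $n\in\bar N$ one has $\delta(an)=\alpha(\delta)(a)n$, which need not vanish), so it cannot be a $\g_{A/k}$-module direct summand, and the simple submodules annihilated by $\g_{A/k}$ do not exhaust the invariants. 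Concretely, take $A=k[x]$ over a field $k$ of characteristic $0$, $\g_{A/k}=T_{A/k}=A\partial_x$, and $S^0=A$: this is a \emph{simple} $\g_{A/k}$-module, yet $\bar S^0=k\neq 0$, so no decomposition $S^0=\bar S^0\oplus(S^0)'$ exists. The paper itself signals this in \Lemma{\ref{hilblemma}}, where the invariant-generated summand is written $A\bar S^\cdot$, not $\bar S^\cdot$. If you repair your Reynolds operators so that they project onto $A\bar S^0$ and $A\bar N$ (the sums of the simple submodules having nonzero invariants, which \emph{are} canonical summands), the intertwining survives, but your final display then only yields $n\in A\bar S^0 n_1+\cdots+A\bar S^0 n_r$, which is weaker than the required $\bar S^0$-generation.

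The paper closes this gap differently: it packages your generators into a surjection of $(S^0,\g_{A/k})$-modules $\oplus_{i=1}^r S^0\to S^0\bar N$, notes that this splits as a map of $\g_{A/k}$-modules because $\oplus S^0$ is semi-simple, and then applies the functor $Hom_{\g_{A/k}}(A,-)=(-)^{\g_{A/k}}$, which carries the split surjection to a surjection $\oplus_{i=1}^r\bar S^0\to (S^0\bar N)^{\g_{A/k}}=\bar N$. The splitting of this particular map is what substitutes for the nonexistent projection onto $\bar N$. In the classical case $\alpha=0$ (e.g.\ $A=k$ a field and $\g_k$ an ordinary Lie algebra acting linearly) your argument is correct and amounts to the same thing; it is only in the stated Lie algebroid generality that the step fails.
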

\begin{proof}
  The $S^0$-submodule $S^0\bar N\subset N$ is of finite type over $S^0$, hence there exists an
  integer $ n$ and a surjective homomorphism of $(S^0, \g_{A/k})$-modules $\oplus_{i=1}^n S^0\to S^0
  \bar N$.  Since $S^0$ and $N$ are semi-simple, and therefore also $S^0 \bar N$ is semi-simple, it
  follows that this homomorphism is split, so applying the functor $Hom_{\g_{A/k}}(A, \cdot )$ the
  induced map $\oplus_{i=1}^n \bar S^0\to ( S^0 \bar N)^{\g_{A/k}}= \bar N$ is also surjective.
\end{proof}
\begin{lemma}\label{hilblemma} Make the assumptions in
  \Theorem{\ref{thm001}}, (1) and (2), so in particular $S^\cdot = A
  \bar S^\cdot \oplus Q$, where $Q$ is a semi-simple
  $\g_{A/k}$-submodule of $S^\cdot$, and $M^\cdot = A\bar M^\cdot
  \oplus M_1$.  Then $Q \bar M^\cdot \subset M_1$.
\end{lemma}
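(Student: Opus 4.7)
I would reformulate the inclusion $Q\bar M^\cdot \subset M_1$ as the vanishing of a certain $\g_{A/k}$-equivariant map. Let $\pi\colon M^\cdot \twoheadrightarrow A\bar M^\cdot$ denote the $\g$-equivariant projection with $\ker \pi = M_1$ (well-defined because $M_1$ is a $\g$-submodule complement). Then the inclusion is equivalent to $\pi(qm) = 0$ for every $q \in Q$, $m \in \bar M^\cdot$, and the proof reduces to a Hom-vanishing statement.

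\emph{Step 1 (stability).} First I would verify that $A\bar M^\cdot$ and $Q\bar M^\cdot$ are $\g_{A/k}$-submodules of $M^\cdot$: for $\delta \in \g_{A/k}$, $a \in A$, $q \in Q$, $m \in \bar M^\cdot$, Leibniz gives $\delta(am) = \alpha(\delta)(a)\,m \in A\bar M^\cdot$ and $\delta(qm) = \delta(q)\,m \in Q\bar M^\cdot$ (using $\g$-stability of $Q$). Likewise $\bar S^\cdot \cdot \bar M^\cdot \subset \bar M^\cdot$, so $A\bar S^\cdot \cdot \bar M^\cdot \subset A\bar M^\cdot$.

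\emph{Step 2 (reduction to a Hom vanishing).} For each fixed $m \in \bar M^\cdot$, the right-multiplication map $r_m\colon S^\cdot \to M^\cdot$, $s \mapsto sm$, is $\g$-equivariant because $m$ is invariant: $\delta\,r_m(s) = \delta(sm) = \delta(s)\,m = r_m(\delta s)$. Composing, $\pi \circ r_m\colon S^\cdot \to A\bar M^\cdot$ is $\g$-equivariant and, by Step~1, agrees with $r_m$ on the summand $A\bar S^\cdot$ (since $r_m(A\bar S^\cdot) \subset A\bar M^\cdot$ is fixed by $\pi$). Its restriction to the complementary summand $Q$ is the $\g$-map $\phi_m\colon Q \to A\bar M^\cdot$, $q \mapsto \pi(qm)$. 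The lemma is thus equivalent to the assertion $\phi_m \equiv 0$ for every $m \in \bar M^\cdot$, or equivalently, $\mathrm{Hom}_{\g_{A/k}}(Q, A\bar M^\cdot) = 0$ (applied to the specific maps $\phi_m$).

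\emph{Step 3 (isotypic disjointness).} I would conclude by arguing that $Q$ and $A\bar M^\cdot$ have disjoint $\g$-isotypic constituents. Both $A\bar S^\cdot$ and $A\bar M^\cdot$ arise as $\g$-quotients of $A$-tensor-invariants modules ($A \otimes_k \bar S^\cdot$ and $A \otimes_k \bar M^\cdot$, with $\g$ acting only through $\alpha$ on the $A$-factor), so their simple $\g$-constituents are controlled by those of the $\g$-module $A$ alone. Since $A \subset A\bar S^\cdot$, the decomposition $S^\cdot = A\bar S^\cdot \oplus Q$ forces $Q$ to be supported on isotypic types complementary to those of $A\bar S^\cdot$, hence complementary to those of $A\bar M^\cdot$. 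Therefore every $\g$-map from $Q$ to $A\bar M^\cdot$ vanishes, yielding $\phi_m = 0$ and hence $\pi(qm) = 0$.

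\emph{Main obstacle.} The hard part is Step~3: rigorously establishing the isotypic disjointness. The delicate point is showing that $A\bar S^\cdot$ is a union of $\g$-isotypic components of $S^\cdot$ (so that any $\g$-complement $Q$ genuinely avoids those types), and then identifying the isotypic types of $A\bar M^\cdot$ with the same collection. This rests on analysing how the $\g$-simple constituents of an invariant-generated submodule are determined purely by the $\g$-structure of $A$, so that the two parallel constructions $A\bar S^\cdot \subset S^\cdot$ and $A\bar M^\cdot \subset M^\cdot$ produce the same collection of isotypic types and Step~3 applies uniformly.
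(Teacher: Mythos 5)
Your Steps 1 and 2 are correct and give a clean reduction: the lemma is equivalent to the vanishing of the maps $\phi_m\colon Q\to A\bar M^\cdot$, $q\mapsto \pi(qm)$, which are morphisms of $\g_{A/k}$-modules (equivalently, of modules over the enveloping ring $\Dc=\Dc(\g_{A/k})$, since they are also $A$-linear), and the target statement $\on{Hom}_{\Dc}(Q, A\bar M^\cdot)=0$ is true and does finish the proof. The genuine gap is Step 3, which you yourself flag as the main obstacle: the justification you sketch does not work. The inference ``$A\subset A\bar S^\cdot$, hence the complement $Q$ is supported on isotypic types complementary to those of $A\bar S^\cdot$'' is a non sequitur: a complement of a submodule of a semi-simple module need not avoid that submodule's isotypic types (take $S=L\oplus L$ with $L$ simple); it does so exactly when the submodule is a full union of isotypic components, which is precisely what has to be proved. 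Likewise, the simple constituents of $A\otimes_k\bar M^\cdot$ are not ``controlled by those of the $\g$-module $A$'' in any way that yields the needed disjointness.

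The correct closing argument is an invariants criterion, and it is essentially what the paper does in the language of $\Dc$-modules. Since $\delta(a\bar m)=\alpha(\delta)(a)\bar m$ for $\bar m\in\bar M^\cdot$, the module $A\bar M^\cdot$ is the $\Dc$-submodule generated by the invariant elements $\bar M^\cdot$; hence every nonzero simple quotient of it --- and so, by semi-simplicity, every simple constituent --- is generated by invariant elements and has nonzero invariant space. On the other hand $Q^{\g_{A/k}}\subset (S^\cdot)^{\g_{A/k}}=\bar S^\cdot\subset A\bar S^\cdot$, so $Q^{\g_{A/k}}\subset Q\cap A\bar S^\cdot=0$ and no simple constituent of $Q$ has nonzero invariants. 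A nonzero $\Dc$-map $Q\to A\bar M^\cdot$ would exhibit a simple constituent common to both, a contradiction; hence $\phi_m=0$. (The paper argues in the reverse direction: for $q$ with $\Dc q$ simple it shows $\on{Hom}_{\Dc}(\Dc m_0,\Dc q)=0$, extracts $P\in\Dc$ with $Pm_0=0$ and $Pq\neq 0$, and uses $\Dc q=\Dc Pq$ to push $q\bar m$ into $M_1$; your direct Hom-vanishing, once justified as above, is an equivalent and arguably cleaner route.)
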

\begin{proof} Let $\Dc= \Dc(\g_{A/k})$ be the enveloping ring of differential operators of
  $\g_{{A/k}}$. For elements $q\in Q$ and $\bar m \in \bar M^\cdot$ we have $q\bar m= m_0 + m_1$, where $
  m_0 \in A \bar M^\cdot$ and $m_1 \in M_1$, and our goal is to prove $m_0=0$.  Since $Q$ is
  semi-simple it suffices to prove this when $\Dc\cdot q$ is simple.  Since $\Dc \cdot m_0 \subset A
  \bar M^\cdot$ it follows that $\Dc m_0 = \Dc \cdot (\Dc m_0)^{\g_{A/k}}$. Therefore, since $(\Dc
  \cdot q) \cap A \bar S^\cdot =\{0\}$, it follows that $Hom_{\Dc}(\Dc m_0, \Dc q)=0$; hence
  $\Ann_{\Dc}( m_0 )\not\subset \Ann_{\Dc}(q)$, so there exists an element $P$ in $\Dc$ such that $P
  q\neq 0 $ and $P m_0 =0$.  We have now
  \begin{displaymath}
    \Dc \cdot ( m_0 + m_1)= \Dc (q \bar m) = (\Dc\cdot q) \bar m =
    (\Dc \cdot 
    P\cdot q) \bar m = \Dc P (m_0 + m_1) = \Dc P m_1. 
  \end{displaymath}
  Therefore $m_0 \in M_1 \cap A \bar M^\cdot =\{0\}$.
\end{proof}
{\bf Proof of \Theorem{\ref{thm001}}}
(1): Since $S^\cdot$ is graded noetherian there exists an integer
$r$ such that the graded $\bar S^0$-submodule of $V = \oplus_{i=1}^r
\bar S^i\subset \bar S^\cdot_+$ generates $S^\cdot \bar
S_+^\cdot$ over $S^\cdot$. Let $B^\cdot$ be the subalgebra of
$\bar S^\cdot$ that is generated by $V$ and $\bar S^0$, so in
particular $B^i = \bar S^i$ when $0\leq i \leq r$.  Let $d > r$ be an
integer and assume by induction that $\bar S^i = B^i$ when $i < d$.
We have (as detailed below)
  \begin{displaymath} \bar S^d =  (\sum_{1\leq i \leq r} S^i\cdot
    \bar S^{ d-i}) \cap \bar S^d = \sum_{1\leq i \leq r} \bar S^i
    \cdot \bar S^{d-i} = \sum_{1\leq i \leq r}B^i \cdot B^{d-i} =
    B^d.
\end{displaymath}
The first equality follows from the inclusion $\bar S^d \subset
S^\cdot \bar S_+ = S^\cdot \cdot V $, noting that $V$ is
concentrated in degrees $1$ to $r$.  To see clearly the second
equality, we first have by \Lemma{\ref{hilblemma}} that $\bar
S^{d-i}Q^ i \subset Q^d$; then since $S^d =A \bar S^d\oplus Q^d $, if
$f\in Q^d $, $g\in A \bar S^d$, and $f+g\in \bar S^d$, it follows that
$f=0$.  The last equality follows by induction.  This proves that
$B^\cdot= \bar S^\cdot$. Since $B^0 = \bar S^0$ is noetherian and
$ V$ is of finite type over $B^0$ by \Lemma{\ref{extra-lemma}}, Hilbert's basis theorem
implies that $\bar S^\cdot$ is finitely generated.

(2): The proof is similar to (1). Since $M^\cdot$ is noetherian
there exists an integer $r$ such that the graded $\bar S^0$-submodule
$W= \oplus_{1\leq i \leq r} M^i$ of $\bar M^\cdot$ generates the
graded $S^\cdot$-module $S^\cdot \bar M_{>0}^\cdot\subset
M^\cdot$. There exists also an integer $t$ such that $M^i=0$ when $i
< t$.  Let $N^\cdot$ be the $\bar S^\cdot$-submodule of $\bar
M^\cdot$ that is generated by $\bar M_{\leq 0}^\cdot$ and $W$. Then $N^i= \bar M^i$ when $i\leq r$.
Let $d > r$ be an integer and assume that $N^i = \bar M^i$ when $i <
d$. We have then again by induction
\begin{displaymath} \bar M^d = (\sum_{1\leq i \leq r} S^i \cdot \bar
  M^{d-i}) \cap \bar M^d = \sum_{1\leq i \leq r} \bar S^i \cdot \bar
  M^{d-i} = \sum _{1\leq i \leq r} \bar S^i \cdot N^{d-i}= N^d.
\end{displaymath}  The second equality follows from the inclusion $Q^{d-i}
\cdot \bar M^i \subset M_1$ \Lem{\ref{hilblemma}}.  Therefore $\bar M^\cdot = N^\cdot$, and since $N^\cdot$
is finitely generated over $\bar S^\cdot$
by \Lemma{\ref{extra-lemma}}  this completes the proof.

\section{}
Now assume that $A=k$ and $\alpha =0$, so that $\g_k$ is a Lie algebra over $k$, and assume moreover
that $\g_k$ is solvable, so $S^{\cdot}_{k}$ need not be semi-simple even when $k$ is a field. In
fact, by a famous counter-example to Hilbert's 14th problem due to Nagata
\cite{nagata:14th},  there exists a noetherian $\g_k$-algebra $S^{\cdot}_{k}$ where $\g_k$ is a
solvable finite-dimensional Lie algebra over a field $k$, such that the invariant ring $\bar
S^\cdot$ is not noetherian.  
               
Instead of the invariant subring  we will make a construction of noetherian subalgbras of
$S^{\cdot}_{k}$ using  subsets of  characters of the  $\g_{k}$-module $S^{\cdot}_{k}$.
Let $\Ch(\g_k)= (\g_k/[\g_k,\g_k])^*$ be the character group. If $\chi \in
\Ch(\g_k) $, we put $S^\cdot_{\chi} = \{s\in S^\cdot \ \vert \ (X-
\chi (X))^n s =0, n\gg 1, X\in \g_k\}$. Similarly, we define
$M^\cdot_\chi$ when $M^\cdot$ is any graded $(\g_k,
S^\cdot)$-module, and put $\supp_{\g_k} M^\cdot= \{\chi \in
\Ch(\g_k)\ \vert\ M^\cdot_{{\chi}}\neq 0\}$.
\begin{theorem}\label{solv-theo}
  Let $S^\cdot = \oplus_{n\geq 0} S^n$ be a noetherian graded
  $\g_k$- algebra, where $\g_k$ is a solvable Lie algebra and put 
  $C=\supp_{\g_k} S^\cdot$. Then 
  \begin{displaymath}
    S^\cdot = \bigoplus_{\chi \in C  } S^\cdot_{\chi},
  \end{displaymath}
  and $C$ is a commutative sub-semigroup of $\Ch(\g_k)$ where
  the binary operation is induced by the ring structure of
  $S^\cdot$.
  \begin{enumerate}
  \item Let $\Gamma $ be a subsemigroup of $C$, and put $\Gamma^c =
    C\setminus \Gamma$.  Then
    \begin{displaymath}
      S^\cdot_\Gamma = \bigoplus_{\chi \in \Gamma} S^\cdot_{\chi}
    \end{displaymath}
    is a graded $\g_k$-algebra, and if moreover $\Gamma + \Gamma^c \subset
    \Gamma^c$, then $S^\cdot_\Gamma$ is noetherian.
  \item Let $M^\cdot$ be a finitely-generated graded $(\g_k,
    S^\cdot)$-module and put $C_M = \supp_{\g_k} M^\cdot$. Then
  \begin{displaymath}
    M^\cdot = \bigoplus_{\phi \in C_M} M^\cdot_\phi,
  \end{displaymath}
  and $C$ acts on $C_M$ in a natural way. Let $\Gamma $ be a
  subsemigroup of $C$, $\Phi$ be a subset of $C_M$, put $\Phi^c =
  C_M \setminus \Phi$, and consider the conditions:
  \begin{enumerate}
  \item $\Gamma \cdot \Phi \subset \Phi$,
  \item $ \Gamma^c \cdot \Phi \subset \Phi^c$.
  \end{enumerate}
  Then $(a)$ implies that
\begin{displaymath}
  M_\Phi^\cdot = \bigoplus_{\phi \in \Phi } M^\cdot_\phi
\end{displaymath}
is a graded $(S^\cdot_\Gamma, \g_k)$-module. If also (b) is
satisfied, then $M^\cdot_\Phi$ is of finite type over
$S_\Gamma^\cdot$.
\end{enumerate}
\end{theorem}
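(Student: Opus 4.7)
The theorem separates into two qualitatively different parts: the weight-space decomposition together with the semigroup structure on $C$ (and its action on $C_M$), and the noetherian/finite-generation statements for $S^\cdot_\Gamma$ and $M^\cdot_\Phi$. The first is representation-theoretic and the second is a short calculation once the decomposition is in place.

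For the decomposition $S^\cdot = \bigoplus_{\chi \in C} S^\cdot_\chi$ and its module analogue I would invoke the theory of generalized weight spaces for solvable Lie algebras. The $\g_k$-action is locally finite (each homogeneous element generates a finite-dimensional $\g_k$-stable subspace, using that $\g_k$ is of finite type together with the graded noetherian hypothesis), and Lie's theorem applied on each such subspace decomposes it into generalized weight spaces; patching over all finite-dimensional $\g_k$-stable subspaces yields the two direct-sum decompositions. That $C$ is closed under addition follows from the Leibniz rule and the binomial identity
\[
  (X - (\chi+\chi')(X))^n (s\, s') = \sum_{i+j=n} \binom{n}{i} \bigl((X-\chi(X))^i s\bigr) \bigl((X-\chi'(X))^j s'\bigr),
\]
which shows $S^\cdot_\chi \cdot S^\cdot_{\chi'} \subset S^\cdot_{\chi+\chi'}$; the same identity applied to $s \cdot m$ yields the natural $C$-action on $C_M$. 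The subalgebra and submodule claims in (1) and (2) are then immediate from the semigroup/action hypotheses on $\Gamma$ and $\Phi$ together with the $\g_k$-stability of each weight component.

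The heart of the theorem is finite generation. For (1), let $I \subset S^\cdot_\Gamma$ be any ideal. Since $S^\cdot$ is noetherian, $IS^\cdot$ is finitely generated over $S^\cdot$, and I may choose generators $x_1, \ldots, x_m$ inside $I$ itself (by rewriting any generators as $S^\cdot$-combinations of elements of $I$). For any $a \in I$ write $a = \sum_i s_i x_i$ with $s_i \in S^\cdot$ and decompose $s_i = s_i^\Gamma + s_i^{\Gamma^c}$ via the weight decomposition. Because $x_i$ has weights in $\Gamma$ and $\Gamma + \Gamma^c \subset \Gamma^c$, each summand $s_i^{\Gamma^c} x_i$ lies in $S^\cdot_{\Gamma^c}$; since $a \in S^\cdot_\Gamma$, comparing weight components forces $\sum_i s_i^{\Gamma^c} x_i = 0$, so $a = \sum_i s_i^\Gamma x_i$ lies in the $S^\cdot_\Gamma$-span of the $x_i$'s. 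Hence every ideal of $S^\cdot_\Gamma$ is finitely generated, and $S^\cdot_\Gamma$ is noetherian. Part (2) is entirely parallel: $M^\cdot$ is noetherian over $S^\cdot$, so $S^\cdot M^\cdot_\Phi$ admits finitely many $S^\cdot$-generators that can be chosen in $M^\cdot_\Phi$, and the same weight-separation argument using (b) $\Gamma^c \cdot \Phi \subset \Phi^c$ shows these generators generate $M^\cdot_\Phi$ over $S^\cdot_\Gamma$.

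The most delicate point is the decomposition step, which tacitly relies on local finiteness of the $\g_k$-action and the applicability of Lie's theorem (hence implicitly a characteristic-zero assumption on $k$); once the decomposition is at hand, the rest of the theorem reduces to a formal manipulation with the ``absorption'' hypotheses $\Gamma + \Gamma^c \subset \Gamma^c$ and $\Gamma^c \cdot \Phi \subset \Phi^c$.
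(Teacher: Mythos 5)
Your finite-generation argument is correct once the weight decomposition is granted, and it takes a genuinely different route from the paper's. The paper reruns Hilbert's degree-by-degree induction from its first theorem: it introduces a subalgebra $B^\cdot$ with $B^i=S^i_\Gamma$ for $i\le r$ and proves $B^d=S^d_\Gamma$ by induction on $d$, using $S^{d-i}_\Gamma S^i_{\Gamma^c}\subset S^d_{\Gamma^c}$ to pass from $(\sum_i S^i S^{d-i}_\Gamma)\cap S^d_\Gamma$ to $\sum_i S^i_\Gamma S^{d-i}_\Gamma$. You instead observe that the projection $\pi:S^\cdot\to S^\cdot_\Gamma$ killing the $\Gamma^c$-components is $S^\cdot_\Gamma$-linear (this is exactly where $\Gamma+\Gamma^c\subset\Gamma^c$ enters), so every ideal $I$ of $S^\cdot_\Gamma$ satisfies $I=\pi(IS^\cdot)$ and inherits finite generation from $IS^\cdot$. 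Your version is arguably cleaner: it does not use the grading at this stage, it gives noetherianity of all of $S^\cdot_\Gamma$ (including degree $0$, whose noetherianity the paper's reduction to a finitely generated algebra over $S^0_\Gamma$ quietly needs), and the identical projection argument disposes of part (2).

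The genuine gap is in the decomposition step, and your proposed justification would not close it. Lie's theorem triangularizes a finite-dimensional module over a solvable Lie algebra, but it does not yield $V=\bigoplus_\chi V_\chi$ for the joint generalized weight spaces $V_\chi=\{v\ \vert\ (X-\chi(X))^n v=0,\ X\in\g_k\}$; that primary decomposition is a theorem for \emph{nilpotent} Lie algebras and fails for solvable ones. Concretely, let $\g_k=kh\oplus ke$ with $[h,e]=-e$ act on $S^\cdot=k[x,y]$ by $h=y\partial_y$ and $e=x\partial_y$. Every character $\chi$ kills $e$, and $y\in S^\cdot_\chi$ would force $\chi(h)=1$; but then $(h+e-1)y=x$ and $(h+e-1)x=-x$, so $(h+e-1)^n y\neq 0$ for all $n$, whence $y\notin\bigoplus_\chi S^\cdot_\chi$ while $x$ does lie there. (Local finiteness of the action on a general noetherian graded $\g_k$-algebra is also not automatic, e.g.\ already on $S^0$.) To be fair, the paper's own proof never establishes the decomposition either --- it only proves the multiplicativity $S^\cdot_\chi S^\cdot_{\chi'}\subset S^\cdot_{\chi+\chi'}$, which your binomial identity handles correctly --- so this is a defect of the statement as much as of your write-up; but it cannot be repaired by an appeal to Lie's theorem, and as written the displayed decomposition needs $\g_k$ nilpotent (or some additional hypothesis making the weight spaces exhaust $S^\cdot$) to be true.
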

\begin{remark}\rm
  The assumption are trivially true when $\Gamma = \{0\}\subset C$, and $S^{\cdot}_{{(0)}} $ is the
  subalgebra on which $\g_{k}$ acts locally nilpotently. If moreover $k$ is a field of characteristic $0$
  and $S^{\cdot}_{{(0)}} $ is the symmetric algebra over a finite dimensional representation of
  $\g_{k}$, Weitzenb\"ock's theorem \cite{weitzenbock}  implies that its  invariant ring
  $\bar S^{\cdot}_{{(0)}}$ is noetherian.
\end{remark}
\begin{proof}
  (1): Since $\g_k$ is solvable it follows that
  $S^\cdot_{\chi}S^\cdot_{\chi'}\subset S^\cdot_{{\chi+
      \chi'}}$, implying that $C$ is a sub-semigroup of $\Ch(\g_k)$,
  and $S^\cdot_{\Gamma}$ is a $\g_k$-subalgebra. It remains to prove
  that $S^\cdot_\Gamma$ is noetherian.  Define a noetherian algebra
  $B^\cdot$ as in the proof of \Theorem{\ref{thm001}}, so $B^i =
  S_\Gamma^i$ when $0\leq i \leq r$.  Let $d > r$ be an integer and
  assume that $B^i = S_\Gamma^i$ when $0 \leq i \leq d-1$.  Since
  $\Gamma + \Gamma^c \subset \Gamma^c$ it follows that
  $S^\cdot_{\Gamma^c} S^\cdot_{\Gamma} \subset
  S^\cdot_{\Gamma^c}$. Therefore $S_\Gamma^{d-i} S^i_{\Gamma^c}
  \subset S^d_{\Gamma^c}$, and we get as before
  \begin{displaymath}
    S^d_{\Gamma} =  (\sum_{1\leq i \leq d-1} S^i\cdot 
S_\Gamma^{ d-i}) \cap S_\Gamma^d = \sum_{1\leq i \leq d-1} S_\Gamma^i \cdot 
S_\Gamma^{d-i} = \sum_{1\leq i \leq d-1}B^i \cdot B^{d-i} = B^d.
  \end{displaymath}
  This proves by induction that $B^\cdot = S^\cdot_\Gamma$.

  (2): The action of $C$ on $C_M$ is induced by the $S^\cdot$-action
  on $M^\cdot$.  The proof that $M^\cdot_\Phi$ is of finite type
  over $S^\cdot_\Gamma$ is analogous to the proof of
  \Theorem{\ref{thm001}}, (2).
\end{proof}
\begin{bibsection}
 \begin{biblist} 
\bib{hilbert-invarianttheory}{book}{
  author={Hilbert, David},
  title={Theory of algebraic invariants},
  note={Translated from the German and with a preface by Reinhard C. Laubenbacher; Edited and with an introduction by Bernd Sturmfels},
  publisher={Cambridge University Press},
  place={Cambridge},
  date={1993},
  pages={xiv+191},
  isbn={0-521-44457-8},
  isbn={0-521-44903-0},
  review={\MR {1266168 (97j:01049)}},
}

\bib{nagata:14th}{article}{
  author={Nagata, Masayoshi},
  title={On the $14$-th problem of Hilbert},
  journal={Amer. J. Math.},
  volume={81},
  date={1959},
  pages={766--772},
  issn={0002-9327},
  review={\MR {0105409 (21 \#4151)}},
}

\bib{weitzenbock}{article}{
  author={Weitzenb{\"o}ck, R.},
  title={\"Uber die Invarianten von linearen Gruppen},
  language={German},
  journal={Acta Math.},
  volume={58},
  date={1932},
  number={1},
  pages={231--293},
  issn={0001-5962},
  review={\MR {1555349}},
  doi={10.1007/BF02547779},
}
\end{biblist}
\end{bibsection}

\end{document}